\def\today{\ifcase\month\or
  January\or February\or March\or April\or May\or June\or
  July\or August\or September\or October\or November\or December\fi
  \space\number\day, \number\year}
 \newtheorem{theorem}{Theorem}
 \newtheorem{lemma}[theorem]{Lemma}
 \newtheorem{corollary}[theorem]{Corollary}
 \theoremstyle{definition}
 \theoremstyle{remark}
 \newcommand{\hh}{\tfrac12}
 \newcommand{\ds}{\text{\rm d}s}
  \renewcommand{\d}{\text{\rm d}}
\newcommand{\re}{{\rm Re}\,}
\newcommand{\dd}{\,{\rm d}}
\newcommand{\meio}{\frac{1}{2}}
\newcommand{\logfeio}{\log\log C(t,\pi)^{3/m}}
\newcommand{\escolha}{\max\left\{\frac{\logfeio-5\log\logfeio}{(1+2\vartheta)\pi},1\right\}}
\newcommand{\functionaleq}{\Lambda(s,\pi)=\kappa\,\Lambda(1-s,\tilde\pi)}
\begin{document}
\title[On the argument of $L$-functions]{On the argument of $L$-functions}
\author[Carneiro and Finder]{Emanuel Carneiro and Renan Finder}
\subjclass[2000]{11M06, 11M26, 11M36, 11M41, 41A30}
\keywords{Riemann zeta-function, automorphic $L$-functions, Beurling-Selberg extremal problem, extremal functions, exponential type}
\address{IMPA - Instituto de Matematica Pura e Aplicada - Estrada Dona Castorina, 110, Rio de Janeiro, RJ, Brazil 22460-320}
\email{carneiro@impa.br}
\email{feliz@impa.br}

\allowdisplaybreaks
\numberwithin{equation}{section}

\maketitle

\begin{abstract}
For $L(\cdot,\pi)$ in a large class of $L$-functions, assuming the generalized Riemann hypothesis, we show an explicit bound for the function $S_1(t,\pi)=\frac{1}{\pi}\int_{1/2}^\infty\log|L(\sigma+it,\pi)|\dd\sigma$, expressed in terms of its analytic conductor. This enables us to give an alternative proof of the most recent (conditional) bound for $S(t,\pi)=\frac{1}{\pi}{\rm arg}\,L\left(\meio+it,\pi\right)$, which is the derivative of $S_1(\cdot,\pi)$ at $t$.
\end{abstract}

\section{Introduction}

Throughout this note, the notation $O(E)$ refers to a quantity whose absolute value is bounded by a universal constant times $E$. For any integrable function $h:\mathbb R\to\mathbb C$, its Fourier transform is defined as $\widehat h(\xi)=\int_{-\infty}^\infty h(x)e^{-2\pi ix\xi}\dd x$. We use the function $$\Gamma_\mathbb R(z)=\pi^{-z/2}\,\Gamma\left(\frac{z}{2}\right),$$ where $\Gamma$ is the meromorphic extension of $z\mapsto\int_0^\infty x^{z-1}e^{-x}\dd x$.

\subsection{Background}
Let $\zeta$ be the Riemann zeta-function and let $S(t)=\frac{1}{\pi}\arg\zeta(\meio+it)$, where the argument is obtained by continuous variation along the ray $\{s\in\mathbb C\,|\,{\rm Re}\,s\geq\meio\text{ and }{\rm Im}\,s=t\}$, starting from $0$ at infinity. For $t\ge1$, the number of zeros of $\zeta$ whose imaginary part is between $0$ and $t$ is
\begin{equation}\label{numberofzeros}
N(t)=\frac{t}{2\pi}\log\frac{t}{2\pi}-\frac{t}{2\pi}+\frac{7}{8}+S(t)+O\left(\frac{1}{t}\right),
\end{equation}
provided that $S(t)$ and $N(t)$ are defined in a consistent way when $t$ is the imaginary part of a zero of $\zeta$.

\smallskip

In his article \cite{L}, Littlewood considered the function 
$$S_1(t)=\frac{1}{\pi}\int_{1/2}^\infty\log|\zeta(\sigma+it)|\dd\sigma$$ 
and observed that 
$$\int_t^u S(v)\dd v=S_1(u)-S_1(t).$$
The proof of \cite[Theorem 9]{L} shows how it is possible to use \eqref{numberofzeros} to derive a bound for $S$ from a bound for $S_1$. The idea is that, since $N(t)$ is nondecreasing, $S(t)$ does not decrease faster than $-\frac{t}{2\pi}\log\frac{t}{2\pi}+\frac{t}{2\pi}$, therefore a large value of $S(t)$ would cause a large variation of $S_1$ near $t$. In the same article, he assumed the Riemann hypothesis to conclude that $$S(t)=O\left(\frac{\log t}{\log\log t}\right)$$ and $$S_1(t)=O\left(\frac{\log t}{(\log\log t)^2}\right)$$ for $t\geq 3$.

\smallskip

The works \cite{F, GG, RS} went further by finding numerical bounds for 
\begin{equation}\label{limsup_S}
\limsup_{t\to\infty}\left|S(t)\left(\frac{\log t}{\log\log t}\right)^{-1}\right|,
\end{equation}
while \cite{F2,KK} exhibit numerical bounds for 
\begin{equation}\label{limsup_S_1}
\limsup_{t\to\infty}\left|S_1(t)\left(\frac{\log t}{(\log\log t)^2}\right)^{-1}\right|.
\end{equation}
Ramachandra and Sankaranarayanan, in \cite{RS}, also remark that a bound of the same kind is true for Dirichlet $L$-functions, assuming the corresponding Riemann hypothesis. The article \cite{GG} introduces the use of extremal functions of exponential type in this problem.

\smallskip

Currently, the best conditional bounds for \eqref{limsup_S} and \eqref{limsup_S_1} are due to Carneiro, Chandee and Milinovich. In the article \cite{CCM} they showed that, if the Riemann hypothesis holds, $$S_1(t)=\frac{1}{4\pi}\log t-\frac{1}{\pi}\sum_\gamma f_1(t-\gamma)+O(1),$$ where the sum is over all $\gamma$ such that $\zeta\left(\meio+i\gamma\right)=0$ and
\begin{equation}\label{2}
f_1(x)=\meio\int_{1/2}^{3/2}\log\frac{1+x^2}{\left(\sigma-\meio\right)^2+x^2}\dd\sigma=1-x\arctan\left(\frac{1}{x}\right).
\end{equation}
Then, the tools of \cite{CLV} were used to find real entire minorants and majorants of exponential type for $f_1$, which allowed the use of the Guinand-Weil explicit formula. By this method they obtained \cite[Theorem 1]{CCM}
\begin{equation}\label{3}
S_1(t)\leq \frac{\pi}{48}\frac{\log t}{(\log\log t)^2}+O\left(\frac{\log t\log\log\log t}{(\log\log t)^3}\right)\end{equation}
and
\begin{equation}\label{4}
S_1(t)\geq-\frac{\pi}{24}\frac{\log t}{(\log\log t)^2}+O\left(\frac{\log t\log\log\log t}{(\log\log t)^3}\right).
\end{equation}
These inequalities bound any difference $S_1(u)-S_1(t)=\int_t^uS(v)\dd v$. Also, \eqref{numberofzeros} may be used to compare this difference to $S(t)$, and choosing appropriate values of $u$ yields the inequality \cite[Theorem 2]{CCM}
\begin{equation}\label{1}
|S(t)|\leq\frac{1}{4}\frac{\log t}{\log\log t}+O\left(\frac{\log t\log\log\log t}{(\log\log t)^2}\right)
\end{equation}
for sufficiently large $t$. A shorter proof of \eqref{1} was recently obtained in \cite[Theorem 1]{CCM2} using the classical Beurling-Selberg majorants and minorants of characteristic functions of intervals and exploiting the fact that $\zeta$ is self-dual (i.e. $\zeta(s) = \overline{\zeta(\overline{s})}$).

\subsection{L-functions}
We work with a meromorphic function $L(\cdot,\pi)$ \footnote{Our notation is motivated by $L$-functions arising from cuspidal automorphic representations $\pi$ of $GL(m)$ over a number field.} on $\mathbb C$ which meets the following requirements (for some positive integer $m$ and some $\vartheta\in[0,1]$). The examples include the Dirichlet $L$-functions $L(\cdot,\chi)$ for primitive characters $\chi$.

\begin{enumerate}
\item[(i)] There exists a sequence $\{\lambda_\pi(n)\}_{n\ge1}$ of complex numbers ($\lambda_\pi(1) =1$) such that the series $$\sum_{n=1}^\infty\frac{\lambda_\pi(n)}{n^s}$$ converges absolutely to $L(s,\pi)$ on $\{s\in\mathbb C \,|\,\text{Re}\,s>1\}$.

\smallskip

\item[(ii)] For each prime number $p$, there exist $\alpha_{1,\pi}(p),\alpha_{2,\pi}(p),\ldots,\alpha_{m,\pi}(p)$ in $\mathbb C$ such that $|\alpha_{j,\pi}(p)|\leq p^\vartheta$ and 
$$L(s,\pi)=\prod_p\prod_{j=1}^m\left(1-\frac{\alpha_{j,\pi}(p)}{p^s}\right)^{-1},$$
with absolute convergence on the half plane $\text{Re}\,s>1$.

\smallskip

\item[(iii)] For some positive integer $N$ and some complex numbers $\mu_1,\mu_2,\ldots,\mu_m$ whose real parts are greater than $-1$ and such that $\{\mu_1,\mu_2,\ldots,\mu_m\}=\{\overline{\mu_1},\overline{\mu_2},\ldots,\overline{\mu_m}\}$, the completed $L$-function 
$$\Lambda(s,\pi)=N^{s/2}\prod_{j=1}^m \Gamma_\mathbb R(s+\mu_j)L(s,\pi)$$
is a meromorphic function of order 1 that has no poles other than $0$ and $1$. The points $0$ and $1$ are poles with the same order $r(\pi)\in\{0,1,\ldots,m\}$. Furthermore, the function $\Lambda(s, \tilde\pi):=\overline{\Lambda(\overline s, \pi)}$ satisfies the functional equation
\begin{equation}\label{Intro_FE}
\functionaleq
\end{equation}
for some unitary complex number $\kappa$.
\end{enumerate}

Except for the assumption $r(\pi)\le m$, we are in the same framework as \cite[Chapter 5]{IK}, where many examples may be found.

\subsection{Main results}

The theorems we prove are analogues of \eqref{3}, \eqref{4} and \eqref{1} for $L$-functions. They are based on the {\it generalized Riemann hypothesis}, which asserts that $\Lambda(s,\pi)\neq0$ if ${\rm Re}\,s\neq\meio$. The function $$C(t,\pi)=N\prod_{j=1}^m(|it+\mu_j|+3),$$ called the {\it analytic conductor} of $L(\cdot,\pi)$, is used in their statements. Our first result is about the function 
$$S_1(t,\pi)=\frac{1}{\pi}\int_{1/2}^\infty\log|L(\sigma+it,\pi)|\dd\sigma.$$
\begin{theorem} \label{theorem1}
Let $L(\cdot,\pi)$ satisfy the generalized Riemann hypothesis. Then, for every real number $t$,
\begin{align*}
S_1(t,\pi)\leq \frac{(1+2\vartheta)^2\,\pi}{48}\frac{\log C(t,\pi)}{(\logfeio)^2} +O\left(\frac{\log C(t,\pi)\log\logfeio}{(\logfeio)^3}\right)
\end{align*}
and
\begin{align*}
S_1(t,\pi)\geq-\frac{(1+2\vartheta)^2\,\pi}{24}\frac{\log C(t,\pi)}{(\logfeio)^2}+O\left(\frac{\log C(t,\pi)\log\logfeio}{(\logfeio)^3}\right).
\end{align*}
\end{theorem}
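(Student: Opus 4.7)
\emph{Proof proposal.} My plan is to extend to general $L$-functions the three-step strategy used in \cite{CCM} to prove \eqref{3}--\eqref{4} for the Riemann zeta-function: a Littlewood-type pointwise identity for $S_1(t,\pi)$, Beurling--Selberg extremal approximations for the function $f_1$ in \eqref{2}, and the Guinand--Weil explicit formula. The analytic conductor $C(t,\pi)$ will play the role that $t$ plays in \cite{CCM}, and I will track carefully where the Ramanujan-type parameter $\vartheta$ enters.

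\smallskip

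The first step is to establish the pointwise identity
$$S_1(t,\pi)=\frac{1}{4\pi}\log C(t,\pi)-\frac{1}{\pi}\sum_{\gamma}f_1(t-\gamma)+O(1),$$
valid uniformly in $t\in\R$, where the sum runs over the ordinates $\gamma$ of the nontrivial zeros $\meio+i\gamma$ of $\Lambda(\cdot,\pi)$. This is obtained by applying Littlewood's lemma to $\log\Lambda(s,\pi)$ on a tall rectangle with left side $\re s=\meio$ and right side $\re s=3/2$, using the functional equation \eqref{Intro_FE} to handle the right boundary (where the absolutely convergent Dirichlet series gives $\log|L(3/2+it,\pi)|=O(1)$ via $|\lambda_\pi(n)|\ll n^{\vartheta}$), and applying Stirling's expansion to $\Gamma_\mathbb R(\sigma+it+\mu_j)$ integrated over $\sigma\in[\meio,\infty)$ to extract the main archimedean term $\frac{1}{4\pi}\log C(t,\pi)$. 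Under GRH every interior zero lies on the critical line, and a direct computation identifies its contribution with $f_1(t-\gamma)$ as in \eqref{2}.

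\smallskip

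For the second and third steps, \cite{CLV} supplies, for every $\Delta>0$, real entire functions $m_\Delta\le f_1\le M_\Delta$ on $\R$ of exponential type $2\pi\Delta$, with asymmetric $L^1$-distances to $f_1$ of sizes $\frac{\pi}{12\Delta^2}$ and $\frac{\pi}{24\Delta^2}$ respectively (up to $o(\Delta^{-2})$). Plugging $m_\Delta(t-\cdot)$ into the Guinand--Weil explicit formula for $L(\cdot,\pi)$ writes $\sum_\gamma m_\Delta(t-\gamma)$ as $\frac{\log C(t,\pi)}{2\pi}\widehat{m_\Delta}(0)+O(1)$ plus a prime-power sum supported on $k\log p\le 2\pi\Delta$ and bounded, via $|\alpha_{j,\pi}(p)|\le p^\vartheta$, by $O\bigl(\Delta^{-1}e^{\pi\Delta(1+2\vartheta)}\bigr)$. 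Since $\widehat{m_\Delta}(0)=\int_\R f_1-\int_\R(f_1-m_\Delta)=\frac{\pi}{2}-\frac{\pi}{24\Delta^2}+o(\Delta^{-2})$, the main terms $\frac{1}{4\pi}\log C(t,\pi)$ cancel when combined with the first step, leaving
$$S_1(t,\pi)\le\frac{\log C(t,\pi)}{48\pi\Delta^2}(1+o(1))+O\!\left(\Delta^{-1}e^{\pi\Delta(1+2\vartheta)}\right)+O(1).$$
The choice $\Delta=\escolha$ renders the prime-power error negligible and yields the upper bound with constant $(1+2\vartheta)^2\pi/48$; the parallel argument with $M_\Delta$ produces the lower bound with constant $(1+2\vartheta)^2\pi/24$.

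\smallskip

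\textbf{Main obstacle.} The most delicate point is the first step: one must verify that the integrated Stirling expansion $\sum_{j=1}^m\int_{1/2}^\infty\re\log\Gamma_\mathbb R(\sigma+it+\mu_j)\,\dd\sigma$ collapses uniformly in $t\in\R$ to exactly $\frac{1}{4\pi}\log C(t,\pi)+O(1)$, so that this coefficient pairs \emph{precisely} with the archimedean main term produced by Guinand--Weil. Getting this normalization right across $m$ complex shifts $\mu_j$ with $\re\mu_j>-1$ is what isolates the $L^1$-deviation of the extremal functions as the leading contribution and produces the clean constants $\pi/48$ and $\pi/24$; once achieved, the $\vartheta$-dependence enters only through the prime-power side, making $\Delta\asymp (1+2\vartheta)^{-1}\log\log C(t,\pi)^{3/m}$ the optimal balance.
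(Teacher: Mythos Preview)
Your three-step strategy is exactly the paper's: a pointwise identity expressing $S_1(t,\pi)$ as $\text{(constant)}\cdot\log C(t,\pi)$ minus a sum over zeros of a fixed weight function, Beurling--Selberg extremal approximants for that weight, and the Guinand--Weil explicit formula with the same choice of $\Delta$. So the plan is correct in spirit.

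There is, however, a genuine gap in your first step. You propose to truncate the $\sigma$-integral at $3/2$ and claim that the Dirichlet series gives $\log|L(3/2+it,\pi)|=O(1)$ via $|\lambda_\pi(n)|\ll n^\vartheta$. This requires $\sum_n n^{\vartheta-3/2}<\infty$, i.e.\ $\vartheta<\tfrac12$, but the hypotheses allow any $\vartheta\in[0,1]$. For $\vartheta\ge\tfrac12$ the Euler product and Dirichlet series need not converge absolutely at $\sigma=3/2$, and your derivation of the identity with $f_1$ breaks down as written. The paper handles this by moving the right boundary to $5/2$: since $|\alpha_{j,\pi}(p)|\le p$, one has $|\log|L(s,\pi)||\le m\log\zeta(\re s-1)=O(m\,2^{-\re s})$ for $\re s\ge\tfrac52$, uniformly in $\vartheta\le1$. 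The weight function then becomes $F_1(x)=\tfrac12\int_{1/2}^{5/2}\log\frac{4+x^2}{(\sigma-1/2)^2+x^2}\,\dd\sigma=2f_1(x/2)$, the coefficient of $\log C(t,\pi)$ becomes $1$ rather than $\tfrac14$, and the extremal functions for $F_1$ are obtained from those for $f_1$ by the dilation $G_\Delta^\pm(z)=2g_{2\Delta}^\pm(z/2)$. After this rescaling the $L^1$-deviations are $\pi/(24\Delta^2)$ and $\pi/(12\Delta^2)$, and the cancellation you describe goes through to yield the same constants $(1+2\vartheta)^2\pi/48$ and $(1+2\vartheta)^2\pi/24$.

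Two smaller points. First, the error in the pointwise identity is $O(m)$, not $O(1)$; this is harmless since $m\ll\log C(t,\pi)/\log\log C(t,\pi)^{3/m}$ is absorbed by the stated error term. Second, in the explicit formula you should not drop the contributions from the pole of $\Lambda(\cdot,\pi)$ at $s=0,1$ and from possible trivial zeros at $-\mu_j$ with $\re\mu_j\in(-1,-\tfrac12]$: these produce an extra $O(m\Delta^2e^{\pi\Delta})$, which is dominated by the prime-power error $O\!\big(me^{(1+2\vartheta)\pi\Delta}\big)$ but must be accounted for.
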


\smallskip

If $t$ is not the imaginary part of a zero of $L\left(\cdot,\pi\right)$ and $t\neq0$, the argument function is defined by 
$$S(t,\pi)=-\frac{1}{\pi}\int_{1/2}^\infty{\rm Im}\frac{L'}{L}(\sigma+it,\pi)\dd\sigma.$$ 
Otherwise, it is 
$$S(t,\pi)=\lim_{\eta\to0}\frac{S(t+\eta,\pi)+S(t-\eta,\pi)}{2}.$$ 
We note that $S_1(t,\pi)$ is a primitive for the function $S(t,\pi)$ (details in Section \ref{Sec3} below). An extension of \eqref{1} to $L$-functions, with a good leading constant, was obtained by Carneiro, Chandee and Milinovich in \cite[Theorem 5]{CCM2}, via a direct approach using extremal majorants and minorants of exponential type for the odd function $f(x) = \arctan\left(\frac{2}{x}\right) - \frac{2x}{4+x^2}$, available in the framework of \cite{CL}. Here we give an alternative proof of this result, deriving it from our Theorem \ref{theorem1}.
\begin{theorem} \label{theorem2}
Let $L(\cdot,\pi)$ satisfy the generalized Riemann hypothesis. Then, for every real number $t$,
$$|S(t,\pi)|\le\frac{1+2\vartheta}{4}\frac{\log C(t,\pi)}{\logfeio}+O\left(\frac{\log C(t,\pi)\log\logfeio}{(\logfeio)^2}\right).$$
\end{theorem}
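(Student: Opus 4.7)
The plan is to derive Theorem \ref{theorem2} from Theorem \ref{theorem1} via Littlewood's classical device of converting bounds on the primitive $S_1$ into bounds on $S$ by exploiting the monotonicity of the zero-counting function $N(\cdot,\pi)$. Two ingredients are needed. The first is the identity $S_1(t+h,\pi)-S_1(t,\pi)=\int_t^{t+h}S(v,\pi)\,\dd v$, whose justification is announced for Section \ref{Sec3}. The second is a Riemann--von Mangoldt-type expansion
$$N(v,\pi)-N(t,\pi) = \frac{v-t}{2\pi}\log C(t,\pi)+\bigl(S(v,\pi)-S(t,\pi)\bigr)+O(1)$$
for $|v-t|$ bounded, with an $O(1)$ error uniform in $t$. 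Since $N(\cdot,\pi)$ is nondecreasing, this immediately gives the one-sided inequality $S(v,\pi)\ge S(t,\pi)-\frac{v-t}{2\pi}\log C(t,\pi)+O(1)$ for $v\in[t,t+h]$.

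Integrating this pointwise bound over $[t,t+h]$ and rearranging yields
$$S(t,\pi)\le \frac{S_1(t+h,\pi)-S_1(t,\pi)}{h}+\frac{h}{4\pi}\log C(t,\pi)+O(1).$$
The upper bound for $S_1(t+h,\pi)$ (leading coefficient $\tfrac{(1+2\vartheta)^2\pi}{48}$) and the lower bound for $-S_1(t,\pi)$ (leading coefficient $\tfrac{(1+2\vartheta)^2\pi}{24}$) supplied by Theorem \ref{theorem1} combine, using $\tfrac{1}{48}+\tfrac{1}{24}=\tfrac{1}{16}$, into
$$S_1(t+h,\pi)-S_1(t,\pi)\le \frac{(1+2\vartheta)^2\pi}{16}\frac{\log C(t,\pi)}{(\logfeio)^2}+O\!\left(\frac{\log C(t,\pi)\log\logfeio}{(\logfeio)^3}\right),$$
provided $h=o(1)$, so that $C(t+h,\pi)\sim C(t,\pi)$. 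The two main contributions in $h$ are then balanced by the choice
$$h=\frac{(1+2\vartheta)\pi}{2\logfeio},$$
at which both terms equal $\tfrac{1+2\vartheta}{8}\,\tfrac{\log C(t,\pi)}{\logfeio}$; summing them produces exactly the upper bound of Theorem \ref{theorem2}. The matching lower bound is obtained symmetrically, by integrating over $[t-h,t]$ and using the analogous one-sided inequality derived from the monotonicity of $N$ in the other direction.

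The main obstacle is the Riemann--von Mangoldt formula itself: one needs an error term that is genuinely $O(1)$ uniformly in $t$ and in $h$, for every $L(\cdot,\pi)$ in the admissible class. This requires a careful application of the argument principle to $\Lambda(s,\pi)$ on a rectangle crossing the critical line, combined with the functional equation \eqref{Intro_FE} and Stirling asymptotics for the gamma factors $\Gamma_\mathbb R(s+\mu_j)$, in order to extract the linear main term $\tfrac{v-t}{2\pi}\log C(t,\pi)$ from the archimedean and conductor contributions. Once that formula is in place, the rest is bookkeeping: since we will take $h\asymp 1/\logfeio$, the residual $O(1)+O(h)$ errors are easily absorbed into the admissible secondary term $O\bigl(\log C(t,\pi)\log\logfeio/(\logfeio)^2\bigr)$, for $C(t,\pi)$ sufficiently large.
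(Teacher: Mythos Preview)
Your proposal is correct and follows essentially the same route as the paper: Lemma \ref{lemma4} gives the Riemann--von Mangoldt identity you describe, the monotonicity of $N$ yields the one-sided inequality $S(v,\pi)\ge S(t,\pi)-\tfrac{v-t}{2\pi}\log C(t,\pi)+O(m)$, and integrating this over $[t,t+\nu]$ and combining with Theorem \ref{theorem1} leads, with the optimal choice $\nu=\tfrac{(1+2\vartheta)\pi}{2\logfeio}$, to the stated bound. One small correction: the error in the zero-counting formula is $O(m)$, not $O(1)$, since the paper aims for universal implied constants over the whole class of $L$-functions; this $O(m)$ is harmless because $m=3\log C(t,\pi)/\log C(t,\pi)^{3/m}$, so $O(m/\nu)=O\bigl(\log C(t,\pi)\cdot\logfeio/\log C(t,\pi)^{3/m}\bigr)$ is absorbed by the secondary term.
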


\smallskip

The previous result gives information about the distribution of the zeros of $L$-functions. An example is the following corollary, related to \cite[Corollary 1]{GG} and \cite[Theorem 7]{CCM2}.
\begin{corollary}\label{Cor3}
Let $L(\cdot,\pi)$ satisfy the generalized Riemann hypothesis.
\begin{enumerate}
\item[(i)] Let $m(\gamma, \pi)$ denote the multiplicity of the zero $\hh + i\gamma$ of $\Lambda(\cdot,\pi)$. Then,
$$m(\gamma, \pi) \leq \frac{1+2\vartheta}{2}\frac{\log C(\gamma,\pi)}{\log\log C(\gamma,\pi)^{3/m}}+O\left(\frac{\log C(\gamma,\pi)\log\log\log C(\gamma,\pi)^{3/m}}{(\log\log C(\gamma,\pi)^{3/m})^2}\right).$$

\smallskip

\item[(ii)] Let $\meio+i\gamma$ and $\meio+i\gamma'$ be consecutive zeros of $\Lambda(\cdot,\pi)$. Then $\gamma'-\gamma$ is bounded by some universal constant and if $C(\gamma,\pi)^{3/m}$ is sufficiently large, 
$$\gamma'-\gamma\le\frac{(1+2\vartheta)\pi}{\log\log C(\gamma,\pi)^{3/m}}+O\left(\frac{\log\log\log C(\gamma,\pi)^{3/m}}{(\log\log C(\gamma,\pi)^{3/m})^2}\right).$$
 \end{enumerate}
\end{corollary}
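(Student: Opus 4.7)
The plan is to derive both parts of Corollary \ref{Cor3} from Theorem \ref{theorem2}, using only the standard relation between $S(\cdot,\pi)$ and the zero-counting function of $\Lambda(\cdot,\pi)$; no new analytic input is needed.

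\smallskip

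For part (i), I would use that $S(\cdot,\pi)$ has a jump at each zero $\hh+i\gamma$ of $\Lambda(\cdot,\pi)$ whose height equals its multiplicity. Indeed, writing $L(s,\pi)=c\,(s-\hh-i\gamma)^{m(\gamma,\pi)}\bigl(1+O(|s-\hh-i\gamma|)\bigr)$ near the zero and tracing the continuous argument along the ray $\{\re s\ge\hh,\ \im s=t\}$ as $t$ crosses $\gamma$, one checks that
\begin{equation*}
S(\gamma^{+},\pi)-S(\gamma^{-},\pi)=m(\gamma,\pi).
\end{equation*}
For any small $\eta>0$ such that $\gamma\pm\eta$ are not zero heights, Theorem \ref{theorem2} bounds $|S(\gamma\pm\eta,\pi)|$; letting $\eta\to 0^{+}$ and using continuity of $t\mapsto C(t,\pi)$ (absorbing the change into the error), the triangle inequality yields
\begin{equation*}
m(\gamma,\pi)\le |S(\gamma^{+},\pi)|+|S(\gamma^{-},\pi)|\le \frac{1+2\vartheta}{2}\,\frac{\log C(\gamma,\pi)}{\log\log C(\gamma,\pi)^{3/m}} + O\!\left(\frac{\log C(\gamma,\pi)\log\log\log C(\gamma,\pi)^{3/m}}{(\log\log C(\gamma,\pi)^{3/m})^{2}}\right),
\end{equation*}
which is the claim.

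\smallskip

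For part (ii), the key ingredient is a zero-counting formula analogous to \eqref{numberofzeros}. Applying the argument principle to $\Lambda(\cdot,\pi)$ on a rectangle with vertical sides at $\re s=-\tfrac{1}{2}$ and $\re s=\tfrac{3}{2}$, using \eqref{Intro_FE} to relate the two vertical contributions and Stirling on the $\Gamma_{\R}$-factors, one obtains
\begin{equation*}
N(t,\pi)=M(t,\pi)+S(t,\pi)+O(1),
\end{equation*}
where $N(t,\pi)$ counts zeros of $\Lambda(\cdot,\pi)$ of imaginary part in $(0,t]$ and $M$ is smooth, strictly increasing, with $M'(t,\pi)=\frac{1}{2\pi}\log C(t,\pi)+O(1)$. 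Since $N(\cdot,\pi)$ is constant on $(\gamma,\gamma')$,
\begin{equation*}
\frac{\gamma'-\gamma}{2\pi}\log C(\gamma,\pi)\,(1+o(1))=M({\gamma'}^{-},\pi)-M(\gamma^{+},\pi)=S(\gamma^{+},\pi)-S({\gamma'}^{-},\pi)+O(1).
\end{equation*}
Bounding the right-hand side by Theorem \ref{theorem2} exactly as in part (i) and dividing by $\frac{1}{2\pi}\log C(\gamma,\pi)$ gives the advertised bound on $\gamma'-\gamma$. The universal $O(1)$ bound follows from the same identity, since $M'(t,\pi)$ is bounded below by a positive constant on every compact set, while $S$ is controlled by Theorem \ref{theorem2}, so no zero-free interval can have length exceeding some absolute constant.

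\smallskip

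The main technical obstacle I anticipate is pinning down the precise leading coefficient $\frac{1}{2\pi}$ in $M'(t,\pi)$ so that the constant $(1+2\vartheta)\pi$ in part (ii) emerges exactly rather than up to a factor; this requires careful Stirling bookkeeping for the archimedean factors and the definition of $C(t,\pi)$, but no conceptual novelty beyond the classical derivation of \eqref{numberofzeros}.
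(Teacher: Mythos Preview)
Your approach to part (i) and to the asymptotic inequality in part (ii) is essentially the paper's: both use the jump of $S(\cdot,\pi)$ at a zero (equivalently, the zero-counting relation of Lemma \ref{lemma4}) together with Theorem \ref{theorem2}. One small correction: the error in your zero-counting formula and in $M'(t,\pi)=\frac{1}{2\pi}\log C(t,\pi)+O(1)$ is really $O(m)$, not $O(1)$, since each of the $m$ archimedean factors contributes its own $O(1)$ from Stirling (compare Lemma \ref{lemma4}). This is harmless for the asymptotic statements because $m/\log C(t,\pi)=3/\log C(t,\pi)^{3/m}$ is absorbed into the error term.

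However, your argument for the \emph{universal} bound on $\gamma'-\gamma$ has a genuine gap. Saying ``$M'(t,\pi)$ is bounded below by a positive constant on every compact set'' is circular: you do not yet know that $[\gamma,\gamma']$ lies in any fixed compact set, and across the whole class of $L$-functions nothing prevents $|\gamma|$, $m$, and the $|\mu_j|$ from varying without bound. Moreover, the identity you invoke (as in Lemma \ref{lemma4}) is only established for intervals of bounded length (the paper takes $u-t\le 5$), and the $(1+o(1))$ you write hides a comparison of $C(\gamma',\pi)$ with $C(\gamma,\pi)$ that already presupposes $\gamma'-\gamma=O(1)$. The paper closes this gap with a two-case argument: if $\gamma'-\gamma>5$, apply Lemma \ref{lemma4} on the zero-free subinterval $[\gamma,\gamma+5]$ to force $\log\log C(\gamma,\pi)^{3/m}$ to be bounded by an absolute constant, do the same at $\gamma'$ using the dual $\tilde\pi$, and then exploit the product structure $C(t,\pi)=N\prod_j(|it+\mu_j|+3)$ together with pigeonhole to find an index $j$ for which both $|i\gamma+\mu_j|$ and $|i\gamma'+\mu_j|$ are universally bounded, whence $|\gamma'-\gamma|$ is bounded. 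This last step is the one your sketch is missing.
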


\smallskip

We make no attempt here to estimate the universal bound for the gap between consecutive zeros of our general class of $L$-functions. Such a gap has been estimated (for a slightly restricted class of $L$-functions) in \cite[Theorem 2.1]{Bo}. In addition to $S(\cdot, \pi)$ and $S_1(\cdot, \pi)$, the theory of extremal functions of exponential type can be used to provide upper bounds for the modulus of an $L$-function on the critical line. This has been carried out in the work of Chandee and Soundararajan \cite{CS}:
\begin{align*}
\log\left|L\left(\hh+it,\pi\right)\right|\leq \frac{(1+2\vartheta)\log 2}{2}\frac{\log C(t,\pi)}{\logfeio} +O\left(\frac{\log C(t,\pi)\log\logfeio}{(\logfeio)^2}\right).
\end{align*}
Although they considered explicitly only the case $t=0$, their reasoning is general. Other examples of the use of bandlimited majorants to the theory of the Riemann zeta-function include \cite{CC,CCLM,G}.

\section{Proof of Theorem \ref{theorem1}}

In this section we prove Theorem \ref{theorem1}. We adapt the strategy of \cite{CCM}, where the case of the Riemann zeta-function was considered.

\begin{lemma} \label{lemma1}
Let $L(\cdot,\pi)$ satisfy the generalized Riemann hypothesis. For any real $t$, 
$$S_1(t,\pi)=\frac{1}{\pi}\left(-\sum_\gamma F_1(t-\gamma)+\log C(t,\pi)\right)+O(m),$$ 
where the sum is over all values of $\gamma$ such that $L\left(\meio+i\gamma,\pi\right)=0$, counted with multiplicity, and
\begin{equation}\label{5}
F_1(x)=\meio\int_{1/2}^{5/2}\log\frac{4+x^2}{(\sigma-\meio)^2+x^2}\dd\sigma=2-x\arctan\left(\frac{2}{x}\right).
\end{equation}
\end{lemma}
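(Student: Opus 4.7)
The plan is to adapt Section~2 of \cite{CCM}, which treated the Riemann zeta-function, to the present general class of $L$-functions by combining the Hadamard factorization of $\Lambda(\cdot,\pi)$ with the functional equation \eqref{Intro_FE} and Stirling's formula. The cut-off at $\sigma=5/2$, matching the integration range in the definition \eqref{5} of $F_1$, splits
\[
\pi\,S_1(t,\pi)=\int_{1/2}^{5/2}\log|L(\sigma+it,\pi)|\dd\sigma+\int_{5/2}^{\infty}\log|L(\sigma+it,\pi)|\dd\sigma.
\]
For the tail, absolute convergence of the series $\log L(s,\pi)=\sum_{p}\sum_{j=1}^m\sum_{k\ge 1}\frac{\alpha_{j,\pi}(p)^{k}}{k\,p^{ks}}$ on $\Rep s>1$, together with $|\alpha_{j,\pi}(p)|\le p^\vartheta$, gives $|\log L(\sigma+it,\pi)|\ll m\,2^{\vartheta-\sigma}$, so the tail contributes $O(m)$.

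On $[1/2,5/2]$ I would first reduce to $\Lambda$ using
\[
\log|L(s,\pi)|=\log|\Lambda(s,\pi)|-\tfrac{\sigma}{2}\log N-\sum_{j=1}^{m}\log|\Gamma_{\R}(s+\mu_j)|,
\]
so that the $N$-piece integrates to $-\tfrac{3}{2}\log N$ and each $\Gamma_{\R}$-piece, by Stirling applied on a length-$2$ interval, produces a main term involving $\log(|it+\mu_j|+3)$ together with a linear-in-$|t+\Im\mu_j|$ Stirling piece and an $O(1)$ remainder. Next, under GRH the function $s^{r(\pi)}(s-1)^{r(\pi)}\Lambda(s,\pi)$ is entire of order $1$ with zeros precisely at $\rho=\tfrac12+i\gamma$, whence Hadamard's theorem yields
\[
\log|\Lambda(s,\pi)|=\Rep(A+Bs)-r(\pi)\log|s(s-1)|+\sum_{\rho}\!\left[\log\!\left|1-\tfrac{s}{\rho}\right|+\Rep\tfrac{s}{\rho}\right].
\]
The key computation is that for each $\rho=\tfrac12+i\gamma$ and $x=t-\gamma$,
\[
\int_{1/2}^{5/2}\log\!\left|1-\tfrac{s}{\rho}\right|\dd\sigma=\log(4+x^{2})-F_1(x)-2\log|\rho|,\qquad\int_{1/2}^{5/2}\Rep\tfrac{s}{\rho}\,\dd\sigma=\Rep\tfrac{3+2it}{\rho},
\]
the first identity following directly from the integral representation \eqref{5} of $F_1$.

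To assemble the main term I would compare the Hadamard factorizations of $\Lambda(s,\pi)$ and $\Lambda(1-s,\tilde\pi)$ through \eqref{Intro_FE}: this identifies $\Rep B=-\sum_{\rho}\Rep(1/\rho)$ and, more importantly, forces the conditionally convergent pieces $-2\log|\rho|$, $\log(4+(t-\gamma)^2)$, and $\Rep((3+2it)/\rho)$ to be summed in a symmetric ordering that renders them jointly convergent and cancels them against the linear-in-$|t+\Im\mu_j|$ Stirling pieces and against $\Rep(A+Bs)$. Combined with $-\tfrac{3}{2}\log N$ and the $O(m)$ contribution from $-r(\pi)\log|s(s-1)|$ integrated on the bounded strip (using $r(\pi)\le m$), the surviving main term collapses into $\log C(t,\pi)$, while the remaining zero sum is $-\sum_{\gamma}F_1(t-\gamma)$, absolutely convergent thanks to $F_1(x)=O(1/(1+x^2))$ and the standard zero-counting $N(T,\pi)\ll T\log T$. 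The chief obstacle is precisely this bookkeeping: because $L(\cdot,\pi)$ need not be self-dual, the pairing $\gamma\leftrightarrow-\gamma$ available for $\zeta$ in \cite{CCM} is unavailable, and one must invoke the functional equation relating $\pi$ and $\tilde\pi$ to perform the symmetric grouping. A secondary difficulty is ensuring that the error is truly $O(m)$ and does not absorb a $\log N$ or a $\sum_{j}\log(|\mu_j|+1)$, which requires each of the $m$ Stirling expansions to deliver its full $\log(|it+\mu_j|+3)$ contribution to $\log C(t,\pi)$ with only $O(1)$ slack.
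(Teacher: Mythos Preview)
Your plan to integrate $\log|\Lambda(\sigma+it,\pi)|$ directly via Hadamard's factorization runs into a bookkeeping problem that the paper sidesteps entirely. The paper's device is to subtract the harmless constant $\log|L(\tfrac52+it,\pi)|=O(m)$ inside the integral before passing to $\Lambda$, so that every occurrence of $\Lambda$, $N^{s/2}$, and $\Gamma_{\R}(s+\mu_j)$ appears as the \emph{ratio} of its value at $\sigma+it$ to its value at $\tfrac52+it$. In the Hadamard product this gives
\[
\left|\frac{\Lambda(\sigma+it,\pi)}{\Lambda(\tfrac52+it,\pi)}\right|
=\left|\frac{\sigma+it}{\tfrac52+it}\right|^{-r(\pi)}\left|\frac{\sigma-1+it}{\tfrac32+it}\right|^{-r(\pi)}
\prod_{\rho}\left|\frac{\sigma+it-\rho}{\tfrac52+it-\rho}\right|,
\]
the constants $A$, $B$ and the compensating exponentials $e^{s/\rho}$ having cancelled outright (using only $\Rep B=-\sum_\rho\Rep(1/\rho)$). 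The product over $\rho$ is now absolutely convergent, and its logarithm integrates termwise to exactly $-\sum_\gamma F_1(t-\gamma)$; the $N$-ratio integrates to $\log N$; and each $\Gamma_{\R}$-ratio, via integration by parts and Stirling for $\Gamma_{\R}'/\Gamma_{\R}$, yields $\log(|\mu_j+it|+3)+O(1)$. No conditionally convergent sums, no linear Stirling pieces, no control of $\Imp B$ or $\Rep A$ are ever needed, and the self-duality pairing $\gamma\leftrightarrow-\gamma$ plays no role.

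By contrast, your direct route leaves you holding the Hadamard constants $2\Rep A+3\Rep B-2t\,\Imp B$ and the absolutely convergent but unevaluated sum $\sum_\rho\big[\log(4+(t-\gamma)^2)-2\log|\rho|+\Rep((3+2it)/\rho)\big]$, and you assert without mechanism that these, together with the linear-in-$|t+\Imp\mu_j|$ Stirling terms, collapse to produce $\log C(t,\pi)$ (plus the extra $\tfrac52\log N$ needed to correct your $-\tfrac32\log N$). You also make a concrete error: $-r(\pi)\int_{1/2}^{5/2}\log|s(s-1)|\dd\sigma$ is not $O(m)$ uniformly in $t$; for large $|t|$ it is $-4r(\pi)\log|t|+O(m)$, which must be tracked rather than discarded. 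All of these pieces do ultimately balance, but verifying it amounts to computing $\log|\Lambda(\tfrac52+it,\pi)|$ in two ways and subtracting---i.e.\ reinventing the paper's ratio trick.
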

\begin{proof}
By the product expansion of $L(\cdot,\pi)$ and the inequality $|\alpha_{j,\pi}(p)|\leq p$, $$|\log|L(s,\pi)||\leq m\log\zeta({\rm Re}\,s-1)=O\left(\frac{m}{2^{{\rm Re}\,s}}\right)$$ for any $s$ such that ${\rm Re}\,s\geq\frac{5}{2}$. Because of this and of the fact that $L(\cdot,\pi)$ is meromorphic, $S_1(\cdot,\pi)$ is well defined and
\begin{align*}
\pi S_1(t,\pi)=&\int_{1/2}^{5/2}\log|L(\sigma+it,\pi)|\dd\sigma+O(m)\\
=&\int_{1/2}^{5/2}\left\{\log|L(\sigma+it,\pi)|-\log\left|L\left(\tfrac{5}{2}+it,\pi\right)\right|\right\}\d\sigma+O(m)\\
=&\int_{1/2}^{5/2}\left\{\log|\Lambda(\sigma+it,\pi)|-\log\left|\Lambda\left(\tfrac{5}{2}+it,\pi\right)\right|\right\}\d\sigma +\int_{1/2}^{5/2}\left\{\log\big|N^{(5/2+it)/2}\big|-\log\big|N^{(\sigma+it)/2}\big|\right\}\d\sigma\\
&\ \ \ \ \ \ \ \ +\sum_{j=1}^m\int_{1/2}^{5/2}\left\{\log\left|\Gamma_\mathbb R\left(\tfrac{5}{2}+it+\mu_j\right)\right|-\log|\Gamma_\mathbb R(\sigma+it+\mu_j)|\right\}\d\sigma +O(m).
\end{align*}
We treat each integral separately. For the first one, we use Hadamard's factorization formula 
$$\Lambda(s,\pi)=s^{-r(\pi)}(s-1)^{-r(\pi)}e^{A+Bs}\prod_{\rho}\left(1-\frac{s}{\rho}\right)e^{s/\rho},$$
where $A$ and $B$ are constants and the product is over all zeros of $\Lambda(\cdot,\pi)$. From the functional equation $\functionaleq$, one deduces that $\text{Re}\,B=-\sum_\rho\text{Re}\left(\frac{1}{\rho}\right)$ (see \cite[Proposition 5.7]{IK}). Hence, for $\meio\le\sigma\le\frac{5}{2}$, 
$$\left|\frac{\Lambda(\sigma+it,\pi)}{\Lambda\left(\frac{5}{2}+it,\pi\right)}\right|=\left|\frac{\sigma+it}{\frac{5}{2}+it}\right|^{-r(\pi)}\left|\frac{\sigma-1+it}{\frac{3}{2}+it}\right|^{-r(\pi)}\prod_{\rho}\left|\frac{\sigma+it-\rho}{\frac{5}{2}+it-\rho}\right|,$$ which implies, via the substitution $\rho=\meio+i\gamma$, that 
$$\log\left|\frac{\Lambda(\sigma+it,\pi)}{\Lambda\left(\frac{5}{2}+it,\pi\right)}\right|=O(m)+r(\pi)\log\left|\frac{\frac{3}{2}+it}{\sigma-1+it}\right|+\sum_{\gamma}\meio\log\frac{(\sigma-\meio)^2+(t-\gamma)^2}{4+(t-\gamma)^2}.$$ Since $1\leq\left|\frac{\frac{3}{2}+it}{\sigma-1+it}\right|\leq\frac{\frac{3}{2}}{|\sigma-1|}$, integrating we get 
$$\int_{1/2}^{5/2}\left\{\log\left|\Lambda(\sigma+it,\pi)\right|-\log\left|\Lambda\left(\tfrac{5}{2}+it,\pi\right)\right|\right\}\d\sigma=-\sum_\gamma F_1(t-\gamma)+O(m).$$ 

Our considerations on the last $m$ integrals use Stirling's formula $$\frac{\Gamma'}{\Gamma}(z)=\log(1+z)-\frac{1}{z}+O(1)$$ 
in the form 
\begin{equation}\label{Stirling}
\frac{\Gamma_\mathbb R'}{\Gamma_\mathbb R}(z)=\frac{1}{2}\log(2+z)-\frac{1}{z}+O(1),
\end{equation} 
valid for ${\rm Re}\,z>-\meio$. For any $\mu$ such that ${\rm Re}\,\mu>0$, integration by parts yields
\begin{align*}
\int_{1/2}^{5/2}& \left\{\log\left|\Gamma_\mathbb R\left(\tfrac{5}{2}+\mu+it\right)\right|-\log\left|\Gamma_\mathbb R(\sigma+\mu+it)\right|\right\}\d\sigma\\
=&\int_{1/2}^{5/2}\left(\sigma-\hh\right){\rm Re}\,\frac{\Gamma_\mathbb R'}{\Gamma_\mathbb R}(\sigma+\mu+it)\dd \sigma\\
=&\int_{1/2}^{5/2}\left(\sigma-\hh\right)\left(\hh\log|2+\sigma+\mu+it|-{\rm Re}\,\frac{1}{\sigma+\mu+it}\right)\d \sigma+O(1)\\
=&\meio\int_{1/2}^{5/2}\left(\sigma-\hh\right)\log|2+\sigma+\mu+it|\dd \sigma+O(1)\\
=&\meio\int_{1/2}^{5/2}\left(\sigma-\hh\right)\log(|\mu+it|+3)\dd\sigma+O(1)\\
=&\log(|\mu+it|+3)+O(1).
\end{align*}
If $-1<{\rm Re}\mu\le0$, the relation $\Gamma_\mathbb R(z+2)=\frac{z}{2\pi}\Gamma_\mathbb R(z)$ brings us back to the previous case. Indeed,
\begin{align*}
\int_{1/2}^{5/2} & \log\left\{\left|\Gamma_\mathbb R\left(\tfrac{5}{2}+\mu+it\right)\right|-\log\left|\Gamma_\mathbb R(\sigma+\mu+it)\right|\right\}\d\sigma\\
=&\int_{1/2}^{5/2}\left\{\log\left|\Gamma_\mathbb R\left(\tfrac{9}{2}+\mu+it\right)\right|-\log\left|\Gamma_\mathbb R(2+\sigma+\mu+it)\right|-\log\left|\frac{\frac{5}{2}+\mu+it}{\sigma+\mu+it}\right|\right\}\d\sigma\\
=&\log(|2+\mu+it|+3)+O(1)+O\left(\int_{1/2}^{5/2}\log\left|\frac{\frac{5}{2}+{\rm Re}\,\mu}{\sigma+{\rm Re}\,\mu}\right|\dd\sigma\right)\\
=&\log(|\mu+it|+3)+O(1),
\end{align*}
as before.

Finally, $\int_{1/2}^{5/2}\log|N^{(5/2+it)/2}|-\log|N^{(\sigma+it)/2}|\dd\sigma=\log N$. Combining our computations we get
\begin{align*}
\pi S_1(t,\pi)=&-\sum_\gamma F_1(t-\gamma)+\log N+\sum_{j=1}^m\log(|\mu_j+it|+3)+O(m)\\
=&-\sum_\gamma F_1(t-\gamma)+\log C(t,\pi)+O(m).
\end{align*}
\end{proof}
To estimate the infinite sum that appears in the preceding lemma, we employ the Guinand-Weil explicit formula. Its statement depends on noting that, by the product expansion of $L(\cdot,\pi)$, 
$$\frac{L'}{L}(s,\pi)=-\sum_p\sum_{j=1}^m\frac{\alpha_{j,\pi}(p)}{p^s}\left(1-\frac{\alpha_{j,\pi}(p)}{p^s}\right)^{-1}\log p\,,$$
where the right-hand side converges absolutely if ${\rm Re}\,s>1$. This shows that the logarithmic derivative of $L(\cdot,\pi)$ has a Dirichlet series: 
\begin{equation}\label{Def_Lambda}
\frac{L'}{L}(s,\pi)=-\sum_{n=2}^\infty\frac{\Lambda_\pi(n)}{n^s},
\end{equation}
where $\Lambda_\pi(n)=0$ if $n$ is not a power of prime and $\Lambda_\pi(p^k)=\sum_{j=1}^m\alpha_{j,\pi}(p)^k\log p$ if $p$ is prime and $k$ is a positive integer.

\begin{lemma} \label{lemma2}
Let $h$ be an analytic function defined on a strip $\{z\in\mathbb C\,|\,-\meio-\varepsilon<{\rm Im}\,z<\meio+\varepsilon\}$ such that $h(z)(1+|z|)^{1+\delta}$ is bounded for some positive $\delta$. Then 
\begin{align}\label{7}
\begin{split}
\sum_{\rho} h\left(\frac{\rho - \tfrac12}{i}\right)&= r(\pi)\left\{h\left(\frac{1}{2i}\right)+h\left(-\frac{1}{2i}\right)\right\} + \frac{\log N}{2\pi}\int_{-\infty}^\infty h(u)\dd u\\
& +\frac{1}{\pi}\sum_{j=1}^m\int_{-\infty}^\infty h(u)\,{\rm Re}\,\frac{\Gamma_\mathbb R'}{\Gamma_\mathbb R}\left(\hh+\mu_j+iu\right)\d u\\
& -\frac{1}{2\pi}\sum_{n=2}^\infty\frac{1}{\sqrt{n}}\left\{\Lambda_{\pi}(n)\, \widehat h\left(\frac{\log n}{2\pi}\right)+\overline{\Lambda_{\pi}(n)}\, \widehat h\left(\frac{-\log n}{2\pi}\right)\right\} \\
&- \!\!\sum_{-1 < {\rm Re}\,\mu_j<-\meio}\!\!\left\{h\left(\frac{-\mu_j-\meio}{i}\right)\!+h\left(\frac{\mu_j+\meio}{i}\right)\!\right\} -\meio\sum_{{\rm Re}\,\mu_j=-\meio} \left\{h\left(\frac{-\mu_j-\meio}{i}\right)\!+h\left(\frac{\mu_j+\meio}{i}\right)\!\right\},
\end{split}
\end{align}
where the sum runs over all zeros $\rho$ of $\Lambda(\cdot, \pi)$ and the coefficients $\Lambda_\pi(n)$ are defined by \eqref{Def_Lambda}.
\end{lemma}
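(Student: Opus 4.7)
The plan is to apply the residue theorem to
$$\frac{1}{2\pi i}\oint_{\mathcal{C}_T}\frac{\Lambda'}{\Lambda}(s,\pi)\, h\!\left(\frac{s-\meio}{i}\right) ds,$$
where $\mathcal{C}_T$ is a rectangular contour with vertical sides $\{\mathrm{Re}\,s = 1+\epsilon\}$ and $\{\mathrm{Re}\,s = -\epsilon\}$ (for some small $\epsilon > 0$) and horizontal sides at heights $\pm T$. Inside $\mathcal{C}_T$ the integrand has simple poles at each zero $\rho$ of $\Lambda(\cdot,\pi)$ (contributing $h((\rho-\meio)/i)$, with GRH placing them on the critical line) and at $s = 0, 1$ (contributing $-r(\pi)\, h(\mp 1/(2i))$, since $\Lambda'/\Lambda$ has residue $-r(\pi)$ there). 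Choosing a sequence $T \to \infty$ avoiding imaginary parts of zeros makes the horizontal sides vanish, thanks to the decay $h(z)(1+|z|)^{1+\delta} = O(1)$ together with standard polynomial bounds on $\Lambda'/\Lambda$ on horizontal lines; rearranging then places $\sum_\rho h((\rho-\meio)/i)$ on the left of the identity and the $r(\pi)$ terms on the right, as in \eqref{7}.

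Next I would fold the left vertical side onto the right using the functional equation $\Lambda(s,\pi) = \kappa\, \Lambda(1-s,\tilde\pi)$ and the substitution $s \mapsto 1-s$, producing a single integral along $\{\mathrm{Re}\,s = 1+\epsilon\}$ that carries a $\pi$-piece with weight $h((s-\meio)/i)$ and a $\tilde\pi$-piece with weight $h(-(s-\meio)/i)$. Expanding
$$\frac{\Lambda'}{\Lambda}(s,\cdot) = \frac{\log N}{2} + \sum_{j=1}^m \frac{\Gamma_{\mathbb R}'}{\Gamma_{\mathbb R}}(s + \mu_j(\cdot)) - \sum_{n=2}^\infty\frac{\Lambda_{\cdot}(n)}{n^s},$$
I would shift each piece to the critical line. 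The constant and the Dirichlet-series pieces are holomorphic in the closed strip $\meio \leq \mathrm{Re}\,s \leq 1+\epsilon$, contributing no residues. For the Gamma pieces, the only poles of $\Gamma_{\mathbb R}(s+\mu_j)$ lying in that strip are the simple poles at $s = -\mu_j$ (from the $\pi$ side) and at $s = -\overline{\mu_j}$ (from the $\tilde\pi$ side) for indices $j$ with $-1 < \mathrm{Re}\,\mu_j \leq -\meio$; higher Gamma poles at $s = -\mu_j - 2k$ ($k \geq 1$) are excluded by $\mathrm{Re}\,\mu_j > -1$. Since $\Gamma_{\mathbb R}'/\Gamma_{\mathbb R}$ has residue $-1$ at each such pole, these pickups contribute $-h((-\mu_j-\meio)/i)$ and $-h((\overline{\mu_j}+\meio)/i)$ respectively; the symmetry $\{\mu_j\} = \{\overline{\mu_j}\}$ allows a reindexing that rewrites them in terms of $\mu_j$ alone, yielding the last two sums in \eqref{7} with half-weight on the boundary case $\mathrm{Re}\,\mu_j = -\meio$ coming from an indentation of the contour around a pole on the line of integration.

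On $\{\mathrm{Re}\,s = \meio\}$ I would parametrize $s = \meio + iu$. The $\log N$ term immediately yields $\frac{\log N}{2\pi}\int h(u)\,du$ after summing the $\pi$ and $\tilde\pi$ contributions. For the Gamma term I would combine the $\pi$ and $\tilde\pi$ integrals (substituting $u \mapsto -u$ in the latter) and exploit $\overline{\Gamma_{\mathbb R}'/\Gamma_{\mathbb R}(\overline z)} = \Gamma_{\mathbb R}'/\Gamma_{\mathbb R}(z)$ to recognize the combination $z + \overline z = 2\,\mathrm{Re}\,z$, producing $\frac{1}{\pi}\sum_j \int \mathrm{Re}\,\frac{\Gamma_{\mathbb R}'}{\Gamma_{\mathbb R}}(\meio + \mu_j + iu)\, h(u)\,du$. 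For the prime-sum term I would interchange sum and integral on the line $\mathrm{Re}\,s = 1+\epsilon$ (valid by absolute convergence of the Dirichlet series) and, for each fixed $n$, shift the resulting $u$-contour inside the strip of analyticity of $h$ onto $\mathbb R$, identifying the result with $\widehat h(\pm \log n /(2\pi))$. The main obstacle I anticipate is the careful sign and orientation bookkeeping through the functional-equation fold and the successive contour shifts; a secondary delicate point is the boundary case $\mathrm{Re}\,\mu_j = -\meio$, which requires explicit indentation of the contour around poles on the critical line to produce the stated half-weight.
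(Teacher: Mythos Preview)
Your proposal is correct and follows essentially the same route as the paper: the same contour integral of $h((s-\tfrac12)/i)\,\Lambda'/\Lambda$ over a rectangle, the same use of the functional equation to fold the left edge onto the right, and the same shift to $\mathrm{Re}\,s=\tfrac12$ picking up the Gamma poles at $s=-\mu_j$ for $\mathrm{Re}\,\mu_j\le -\tfrac12$. One small remark: your parenthetical ``with GRH placing them on the critical line'' is unnecessary and slightly misleading, since the lemma is stated and proved unconditionally---the zeros of $\Lambda(\cdot,\pi)$ lie in the closed critical strip regardless, so $(\rho-\tfrac12)/i$ already falls in the strip of analyticity of $h$.
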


\begin{proof}
This is a modification of the proof of \cite[Theorem 5.12]{IK}. The idea is to consider the integral
$$\frac{1}{2\pi i } \oint h\!\left(\frac{s- \tfrac12}{i}\right) \frac{\Lambda'(s,\pi)}{\Lambda(s, \pi)}\,\ds$$
over the rectangular contour connecting the points $1 + \eta + iT_1, - \eta + iT_1, - \eta - iT_2, 1 + \eta - iT_2$, say with $\eta = \varepsilon/2$. Then one sends $T_1, T_2 \to \infty$ over an appropriate sequence of heights that keep the zeros as far as possible (recall that at height $T$, we have $O(\log C(t,\pi))$ zeros, see \cite[Proposition 5.7]{IK}). One then uses the functional equation \eqref{Intro_FE} to replace the integral over the line $\re s = -\eta$ by an integral over the line $\re s = 1 + \eta$, and finally one moves the remaining integrals to the line $\re s = \hh$, picking up possibly some additional poles at the $\mu_j$'s.
\end{proof}

The function $F_1$ defined in \eqref{5} is not analytic. However, if we take $h$ equal to an analytic minorant or majorant of $F_1$, we obtain lower and upper bounds for $\sum_\gamma F_1(t-\gamma)$. To estimate the right-hand side of \eqref{7}, it is convenient to choose $\widehat h$ compactly supported and to minimize the $L^1$-norm of $h-F_1$. Carneiro, Littmann and Vaaler studied this minimization problem in a more abstract setting (see \cite{CLV}) and it was shown in \cite{CCM} that their result could be applied to the function $f_1$ defined by \eqref{2}. The following lemma is a rescaling of the obtained conclusion.

\begin{lemma} \label{lemma3}
For every $\Delta\geq1$, there is a unique pair of real entire functions $G_\Delta^-:\mathbb C\to\mathbb C$ and $G_\Delta^+:\mathbb C\to\mathbb C$ satisfying the following properties:
\begin{enumerate}
\item[(i)] For real $x$ we have $$\frac{-c}{1+x^2}\leq G_\Delta^-(x)\leq F_1(x)\leq G_\Delta^+(x)\leq\frac{c}{1+x^2},$$ for some positive constant $c$. Moreover, for any complex number $z$ we have 
$$|G_\Delta^\pm(z)|=O\left(\frac{\Delta^2}{1+\Delta|z|}e^{2\pi\Delta|{\rm Im}\,z|}\right).$$
\item[(ii)] The Fourier transforms of $G_\Delta^\pm$ are continuous functions supported on the interval $[-\Delta,\Delta]$ and satisfy $$|\widehat G_\Delta^\pm(\xi)|=O(1)$$ for all $\xi\in[-\Delta,\Delta]$.
\item[(iii)] The $L^1$-distances of $G_\Delta^\pm$ to $F_1$ are given by 
$$\int_{-\infty}^\infty \left\{F_1(x)-G_\Delta^-(x)\right\}\d x=\frac{2}{\Delta}\int_{1/2}^{3/2}\left\{\log\left(1+e^{-4\pi\Delta(\sigma-1/2)}\right)-\log\left(1+e^{-4\pi\Delta}\right)\right\}\d\sigma$$ 
and 
$$\int_{-\infty}^{\infty}\left\{G_\Delta^+(x)-F_1(x)\right\}\d x=-\frac{2}{\Delta}\int_{1/2}^{3/2}\left\{\log\left(1-e^{-4\pi\Delta(\sigma-1/2)}\right)-\log\left(1-e^{-4\pi\Delta}\right)\right\}\d\sigma.$$
\end{enumerate}
\end{lemma}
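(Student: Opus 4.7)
The plan is to reduce to the extremal problem for $f_1$ (equation~\eqref{2}) that was already solved in~\cite{CCM} via the abstract framework of~\cite{CLV}, by means of a simple rescaling. The crucial observation is the identity
$$F_1(x) = 2\, f_1(x/2),$$
which is immediate from the closed forms in \eqref{2} and \eqref{5}, or equivalently from the substitution $\sigma = 2\tau - \tfrac12$ inside the integral definition of $F_1$. This is what lets us transport the CCM result for $f_1$ directly to $F_1$.

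Writing $g_\delta^\pm$ for the extremal entire minorant/majorant of $f_1$ of exponential type $2\pi\delta$ provided by~\cite{CCM} (with its pointwise decay on $\R$, Paley--Wiener support in $[-\delta,\delta]$, complex growth $|g_\delta^\pm(z)| = O(\delta^2/(1+\delta|z|)\, e^{2\pi\delta|{\rm Im}\,z|})$, and explicit $L^1$-distance to $f_1$), the candidate is
$$G_\Delta^\pm(z) := 2\, g_{2\Delta}^\pm(z/2).$$
Each property in the lemma is then a matter of tracking scaling factors. The inequalities $G_\Delta^-(x) \le F_1(x) \le G_\Delta^+(x)$ and the decay bound $|G_\Delta^\pm(x)| \le c/(1+x^2)$ come from multiplying the CCM bounds by~$2$; the complex growth estimate follows from $|g_{2\Delta}^\pm(z/2)| = O((2\Delta)^2/(1+\Delta|z|)\, e^{2\pi\Delta|{\rm Im}\,z|})$. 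The exponential type of $g_{2\Delta}^\pm(\cdot/2)$ is $(2\pi\cdot 2\Delta)/2 = 2\pi\Delta$, so Paley--Wiener gives $\widehat{G_\Delta^\pm}(\xi) = 4\, \widehat{g_{2\Delta}^\pm}(2\xi)$, supported in $[-\Delta,\Delta]$ and uniformly $O(1)$ there. Finally, the substitution $x = 2y$ yields
$$\int_{-\infty}^{\infty}(F_1 - G_\Delta^-)(x)\,\dd x = 4\int_{-\infty}^{\infty}(f_1 - g_{2\Delta}^-)(y)\,\dd y,$$
and inserting the CCM $L^1$-formula at $\delta = 2\Delta$ produces exactly the stated identity (and similarly for the majorant).

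No genuine obstacle arises beyond consistent bookkeeping of scaling factors across the different conditions; the hardest point is perhaps just making sure the factor of $2$ in $F_1 = 2f_1(\cdot/2)$, the factor of $2$ in the dilation, and the appropriate choice $\delta = 2\Delta$ combine to give the precise constants $\tfrac{2}{\Delta}$ and $e^{-4\pi\Delta(\sigma-1/2)}$ in part~(iii). Uniqueness transfers from the CLV framework: any other pair $\widetilde G_\Delta^\pm$ satisfying the conclusions would make $\tfrac12 \widetilde G_\Delta^\pm(2\,\cdot\,)$ an alternative extremal pair for $f_1$ of exponential type $4\pi\Delta$, contradicting the uniqueness already proved there.
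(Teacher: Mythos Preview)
Your proposal is correct and follows exactly the same approach as the paper: observe $F_1(x)=2f_1(x/2)$ and set $G_\Delta^\pm(z)=2g_{2\Delta}^\pm(z/2)$, then inherit all properties (including uniqueness) from \cite[Lemma~4]{CCM} by rescaling. The paper's own proof is in fact just those two sentences, so your version is simply a more detailed write-out of the same argument.
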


\begin{proof}
This is a slightly different version of \cite[Lemma 4]{CCM}. The definitions \eqref{2} and \eqref{5} imply that $F_1(x)=2f_1\left(\frac{x}{2}\right)$, so that one only needs to take $G_\Delta^\pm(z)=2g_{2\Delta}^\pm\left(\frac{z}{2}\right)$ in the notation of \cite[Lemma 4]{CCM}.
\end{proof}

Observe that the $L^1$-distances given in Lemma \ref{lemma3} (iii) are of magnitude $1/\Delta^2$. Indeed,
\begin{align}\label{Evaluation_1}
\begin{split}
\int_{-\infty}^\infty \left\{F_1(x)-G_\Delta^-(x)\right\}\d x & = \frac{1}{2\pi\Delta^2}\int_0^{4\pi\Delta}\left\{\log\left(1+e^{-x}\right)-\log\left(1+e^{-4\pi\Delta}\right)\right\}\d x\\
& \le \frac{1}{2\pi\Delta^2}\int_0^\infty\log\left(1+e^{-x}\right)\dd x\\
& = \frac{1}{2\pi\Delta^2}\int_0^1\frac{\log\left(1+y\right)}{y}\dd y\\
& = \frac{\pi}{24\Delta^2}\,,
\end{split}
\end{align}
because $\frac{\log(1+y)}{y}=\sum_{n=1}^\infty\frac{(-1)^{n-1}}{n}y^{n-1}$ for any $y\in(0,1)$ and the convergence is uniform. Similarly,
\begin{align}\label{Evaluation_2}
\begin{split}
\int_{-\infty}^{\infty}\left\{G_\Delta^+(x)-F_1(x)\right\}\d x &  = \frac{1}{2\pi\Delta^2}\int_0^{4\pi\Delta}\left\{\log\left(1-e^{-x}\right)^{-1}-\log\left(1-e^{-4\pi\Delta}\right)^{-1}\right\}\d x\\
& \le \frac{1}{2\pi\Delta^2}\int_0^\infty\log\left(1-e^{-x}\right)^{-1}\dd x\\
& = \frac{1}{2\pi\Delta^2}\int_0^1\frac{\log\left(1-y\right)^{-1}}{y}\dd y\\
& = \frac{\pi}{12\Delta^2}
\end{split}
\end{align}
because $\frac{\log(1-y)}{y}=-\sum_{n=1}^\infty\frac{1}{n}y^{n-1}$ for any $y\in(0,1)$ and the convergence is monotone.

\smallskip

We are now ready to prove Theorem \ref{theorem1}. The strategy is to apply Lemma \ref{lemma2} to the functions $G_\Delta^-(t-\cdot)$ and $G_\Delta^+(t-\cdot)$, to find bounds for $S_1(t,\pi)$ that depend on $\Delta$ and to optimize the choice of $\Delta$.

\begin{proof}[Proof of Theorem \ref{theorem1}]
We first prove the upper bound. For each $\Delta\ge1$, take $G_\Delta^-$ as in Lemma \ref{lemma3} and let $h(z)=G_\Delta^-(t-z)$. By Lemma \ref{lemma1},
\begin{equation}\label{S1GD}
S_1(t,\pi)\le\frac{1}{\pi}\left(-\sum_\gamma h(\gamma)+\log C(t,\pi)\right)+O(m).
\end{equation}
By Lemma \ref{lemma3} (i), the function $G_\Delta^-(z)(1+z^2)$ is bounded on the real line and $G_\Delta^-(z)=O(\Delta^2e^{2\pi\Delta|{\rm Im}z}|)$. An application of the Phragm\'{e}n-Lindel\"{o}f principle for the function $G_\Delta^-(z)(1+z^2)e^{2\pi\Delta iz}$ tells us that this function is bounded on the upper half plane. Hence $z \mapsto G_\Delta^-(z)(1+z^2)$ is bounded on the strip $0\le{\rm Im}\,z\le \hh + \varepsilon$ (for any $\varepsilon >0$), and since it is real entire, it is bounded on the strip $-\hh - \varepsilon \le{\rm Im}\, z\le\hh + \varepsilon$. Therefore, $h$ satisfies the hypotheses of Lemma \ref{lemma2} and we obtain
\begin{align}\label{x}
\begin{split}
\sum_\gamma h(\gamma)=&\frac{\log N}{2\pi}\int_{-\infty}^\infty h(u)\dd u+\frac{1}{\pi}\sum_{j=1}^m\int_{-\infty}^\infty h(u)\,{\rm Re}\,\frac{\Gamma_\mathbb R'}{\Gamma_\mathbb R}\left(\hh+\mu_j+iu\right)\d u\\
&-\frac{1}{2\pi}\sum_{n=2}^\infty\frac{1}{\sqrt{n}}\left\{\Lambda_{\pi}(n)\,\widehat h\left(\frac{\log n}{2\pi}\right)+\overline{\Lambda_{\pi}(n)}\,\widehat h\left(\frac{-\log n}{2\pi}\right)\right\}\\
&+O(m\Delta^2e^{\pi\Delta}).
\end{split}
\end{align}
For each index $j = 1, 2, \ldots, m$, Stirling's formula \eqref{Stirling} yields 
\begin{align*}
\int_{-\infty}^\infty h(u)\,{\rm Re}\,\frac{\Gamma_\mathbb R'}{\Gamma_\mathbb R}\left(\hh+\mu_j+iu\right)\d u &= \meio\int_{-\infty}^\infty G_\Delta^-(t-u)\log\left|\tfrac{5}{2}+\mu_j+iu\right|\dd u\\
&\ \ \ \ \ \  -\int_{-\infty}^\infty G_\Delta^-(t-u)\,{\rm Re}\left(\frac{1}{\mu_j+\meio+iu}\right)\d u+O(1).
\end{align*}
Combining this with the inequality
\begin{align*}
\left|\int_{-\infty}^\infty G_\Delta^-(t-u)\,\text{Re}\left(\frac{1}{\mu_j+\meio+iu}\right)\d u\right|&\leq c\int_{-\infty}^\infty\left|\text{Re}\left(\frac{1}{\mu_j+\meio+iu}\right)\right|\dd u\\
&= c\int_{-\infty}^\infty\frac{|\text{Re}\,\mu_j+\meio|}{(\text{Re}\,\mu_j+\meio)^2+u^2}\dd u\\
&\leq \pi c
\end{align*}
we find that
\begin{align}\label{9}
\begin{split}
\int_{-\infty}^\infty h(u)\,{\rm Re}\,\frac{\Gamma_\mathbb R'}{\Gamma_\mathbb R}\left(\hh+\mu_j+iu\right)\d u & = \meio\int_{-\infty}^\infty G_\Delta^-(t-u)\log\left|\tfrac{5}{2}+\mu_j+iu\right|\dd u+O(1)\\
&= \meio\int_{-\infty}^{\infty}G_\Delta^-(u)\log\left|\tfrac{5}{2}+\mu_j+it-iu\right|\dd u+O(1)\\
& = \meio\int_{-\infty}^\infty G_\Delta^-(u)\big\{\log(|\mu_j+it|+3)+O(\log(|u|+2))\!\big\}\d u+O(1)\\
& = \meio\log(|\mu_j+it|+3)\int_{-\infty}^\infty G_\Delta^-(u)\dd u+O(1).
\end{split}
\end{align}
By Lemma \ref{lemma3} (ii), the Fourier transform $\widehat h(\xi)=e^{-2\pi it\xi}\,\widehat G_\Delta^-(-\xi)$ is supported on $[-\Delta,\Delta]$ and is uniformly bounded. Also, $|\Lambda_\pi(n)|\le m\Lambda(n)n^{\vartheta}$, and therefore
\begin{align}\label{8}
\frac{1}{2\pi}\sum_{n=2}^\infty\frac{1}{\sqrt{n}}\left\{\Lambda_{\pi}(n)\,\widehat h\left(\frac{\log n}{2\pi}\right)+\overline{\Lambda_{\pi}(n)}\,\widehat h\left(\frac{-\log n}{2\pi}\right)\right\} = O\left(m\sum_{n\leq e^{2\pi\Delta}}\Lambda(n)n^{\vartheta-\meio}\right) = O\left(me^{(1+2\vartheta)\pi\Delta}\right),
\end{align}
where the last equality follows by the Prime Number Theorem and summation by parts.

\smallskip

In view of \eqref{9} and \eqref{8}, equation \eqref{x} becomes
\begin{align*}
\sum_\gamma h(\gamma)=&\frac{\log C(t,\pi)}{2\pi}\int_{-\infty}^\infty G_\Delta^-(u)\dd u+O\left(me^{(1+2\vartheta)\pi\Delta}\right)+O\left(m\Delta^2 e^{\pi\Delta}\right).
\end{align*}
Inserting this in \eqref{S1GD} and using \eqref{Evaluation_1} together with the fact that $\int_{-\infty}^\infty F_1(x)\dd x=2\pi$, we obtain
$$S_1(t,\pi)\le\frac{\log C(t,\pi)}{48\pi\Delta^2}+O(m\Delta^2 e^{(1+2\vartheta)\pi\Delta})$$
for any $t$ and any $\Delta\ge1$. Choosing $$\Delta=\escolha,$$ we arrive at the desired conclusion.

\smallskip

The proof of the lower bound follows the same lines, using $h(z)=G_\Delta^+(t-z)$ and inequality \eqref{Evaluation_2}. 
\end{proof}

\section{Theorem \ref{theorem1} implies Theorem \ref{theorem2}}\label{Sec3}

For real numbers $t <u$ let us denote by $N(t,u, \pi)$ the number of nontrivial zeros of $L(\cdot,\pi)$ with ordinates $\gamma$ such that $t \leq \gamma \leq u$, counted with multiplicity (zeros with ordinates equal to the endpoints $t$ or $u$ are counted with half of their multiplicities). The following fact connects the variation of $S(\cdot,\pi)$ to the nontrivial zeros of $L(\cdot,\pi)$, like equation \eqref{numberofzeros} in the case of $\zeta$. 

\begin{lemma} \label{lemma4}
Let $L(\cdot,\pi)$ satisfy the generalized Riemann hypothesis. Let $t$ and $u$ be real numbers such that $t<u\le t+5$. Then 
\begin{equation}\label{Ntu}
N(t,u, \pi) = S(u,\pi)-S(t,\pi)+\frac{u-t}{2\pi}\, \log C(t,\pi)+O(m).
\end{equation}
\end{lemma}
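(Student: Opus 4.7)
The plan is to establish \eqref{Ntu} by invoking the classical Riemann--von Mangoldt counting formula for $L(\cdot,\pi)$ and estimating the difference of the resulting main terms via Stirling's formula. First assume that neither $t$ nor $u$ is the ordinate of a zero of $L(\cdot,\pi)$; the general case follows from the averaging convention in the definition of $S(\cdot,\pi)$, which together with the half-weighting of boundary zeros in $N(t,u,\pi)$ ensures that both sides of \eqref{Ntu} respond identically to perturbations of $t$ or $u$ across zero ordinates.

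For such generic $t,u$, let $N_*(T,\pi)$ denote the number of zeros of $\Lambda(\cdot,\pi)$ with ordinate in $(0,T)$. Applying the argument principle to $\Lambda(\cdot,\pi)$ on a rectangular contour with vertical sides at $\re s=2$ and $\re s=-1$, and using the functional equation $\Lambda(s,\pi)=\kappa\Lambda(1-s,\tilde\pi)$ to reduce the computation on the left half of the contour to one on the right half, yields the identity
$$\pi N_*(T,\pi)=\hat\theta_\pi(T)+\pi S(T,\pi)+O(m),$$
where $\hat\theta_\pi(T):=\frac{T}{2}\log N+\sum_{j=1}^m\arg\Gamma_\mathbb{R}(\hh+iT+\mu_j)$, with the argument of $\Gamma_\mathbb{R}$ chosen by continuous variation from $T=0$. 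The identification $\pi S(T,\pi)=\arg L(\hh+iT,\pi)$ comes from the definition of $S(\cdot,\pi)$ together with $\arg L(2+iT,\pi)=O(m)$ (from the absolutely convergent Dirichlet series). This is essentially \cite[Proposition 5.7]{IK}; the $O(m)$ absorbs the contributions from the possible poles of $\Lambda(\cdot,\pi)$ at $s=0,1$, each of order at most $r(\pi)\le m$.

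Taking the difference at $T=u$ and $T=t$ gives
$$\pi N(t,u,\pi)=\hat\theta_\pi(u)-\hat\theta_\pi(t)+\pi(S(u,\pi)-S(t,\pi))+O(m),$$
using $N_*(u,\pi)-N_*(t,\pi)=N(t,u,\pi)$ in our generic situation. To estimate $\hat\theta_\pi(u)-\hat\theta_\pi(t)$, the $\log N$ contribution is $\frac{u-t}{2}\log N$, while each $\Gamma_\mathbb{R}$ piece contributes
$$\arg\Gamma_\mathbb{R}(\hh+iu+\mu_j)-\arg\Gamma_\mathbb{R}(\hh+it+\mu_j)=\int_t^u\re\frac{\Gamma_\mathbb{R}'}{\Gamma_\mathbb{R}}(\hh+iv+\mu_j)\,dv.$$
Stirling's formula \eqref{Stirling} gives $\re\frac{\Gamma_\mathbb{R}'}{\Gamma_\mathbb{R}}(\hh+iv+\mu_j)=\frac{1}{2}\log|\tfrac{5}{2}+\mu_j+iv|+O(1)$, and the hypothesis $u-t\le 5$ allows us to replace $v$ by $t$ inside the logarithm at a cost of $O(1)$, yielding $\frac{u-t}{2}\log(|it+\mu_j|+3)+O(1)$ for each $j$. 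Summing over $j=1,\dots,m$ gives $\hat\theta_\pi(u)-\hat\theta_\pi(t)=\frac{u-t}{2}\log C(t,\pi)+O(m)$, and dividing through by $\pi$ produces \eqref{Ntu}.

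The main technical obstacle is the derivation of the Riemann--von Mangoldt identity itself: the branch of $\arg\Lambda$ along the contour must be tracked consistently with the convention defining $S(\cdot,\pi)$ via continuous variation from $+\infty$, and the invocation of the functional equation to exchange contributions between the two halves of the critical strip must be handled with care, particularly when the contour passes close to zeros of $\Lambda(\cdot,\pi)$ on the critical line. By contrast, the subsequent Stirling estimates are routine and exploit the smallness of the gap $u-t\le 5$, which prevents the archimedean conductor factors from varying by more than $O(1)$ per term.
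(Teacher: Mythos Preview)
Your argument is correct in outline and reaches the same Stirling integral as the paper, but the organization differs. The paper does not invoke the global Riemann--von Mangoldt formula at all: it works directly on the critical line, observing from the functional equation that $\re\frac{\Lambda'}{\Lambda}\big(\hh+iv,\pi\big)=0$, so that away from the jump discontinuities of $S(\cdot,\pi)$ one has
$$S'(v,\pi)=\frac{1}{\pi}\re\frac{L'}{L}\!\left(\hh+iv,\pi\right)=-\frac{1}{\pi}\left\{\frac{\log N}{2}+\sum_{j=1}^m\re\frac{\Gamma_\mathbb R'}{\Gamma_\mathbb R}\!\left(\hh+\mu_j+iv\right)\right\}.$$
The jumps of $S(\cdot,\pi)$ (at nontrivial zeros, at the possible trivial zeros with $-1<\re\mu_j<-\hh$, and at $v=0$ from the pole of order $r(\pi)$) then give $N(t,u,\pi)$ up to $O(m)$, and one integrates the right-hand side over $[t,u]$. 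Your route via differencing $\pi N_*(T,\pi)=\hat\theta_\pi(T)+\pi S(T,\pi)+O(m)$ is equivalent---indeed $\hat\theta_\pi{}'(v)$ is precisely the gamma-factor sum above---but it imports the argument-principle computation of \cite[Theorem~5.8]{IK} as a black box, whereas the paper's one-line use of the functional equation on $\re s=\hh$ is self-contained and avoids any discussion of contours or branch tracking.

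One small slip: your pointwise assertion ``Stirling's formula \eqref{Stirling} gives $\re\frac{\Gamma_\mathbb R'}{\Gamma_\mathbb R}\big(\hh+iv+\mu_j\big)=\frac{1}{2}\log\big|\tfrac{5}{2}+\mu_j+iv\big|+O(1)$'' is not true uniformly, since the term $-1/z$ in \eqref{Stirling} is unbounded when $\re\mu_j$ is near $-\hh$ and $v$ is near $-\im\mu_j$. The fix (which the paper uses implicitly here, and explicitly in \eqref{9}) is to keep the $-1/z$ term and bound its integral over $v\in[t,u]$ by $\int_{-\infty}^{\infty}\frac{|\re\mu_j+\hh|}{(\re\mu_j+\hh)^2+w^2}\,\d w\le\pi=O(1)$; only after this integration does the $O(1)$ per index $j$ become legitimate.
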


\begin{proof}
If $v\neq0$ and $v$ is not the imaginary part of a zero of $L(\cdot,\pi)$, then $S'(v,\pi)=\frac{1}{\pi}{\rm Re}\frac{L'}{L}(\meio+iv,\pi)$. At each zero (trivial or non-trivial) $\rho = \hh + i \gamma$ of $L(\cdot, \pi)$ the function $S(\cdot,\pi)$ jumps by the multiplicity of this zero; at $0$ it jumps by $-2r(\pi)$; and for each $j$ such that $-1 < {\rm Re}\,\mu_j <-\meio$, it jumps by $2$ at $-{\rm Im}\,\mu_j$. Therefore
$$N(t,u, \pi)  = S(u,\pi)-S(t,\pi)-\frac{1}{\pi}\int_t^u\text{Re}\,\frac{L'}{L}\left(\hh+iv,\pi\right)\d v+O(m).$$ 
By the definition of $\Lambda(\cdot,\pi)$, $$\frac{\Lambda'}{\Lambda}(s,\pi)=\frac{\log N}{2}+\sum_{j=1}^m\frac{\Gamma_\mathbb R'}{\Gamma_\mathbb R}(s+\mu_j)+\frac{L'}{L}(s,\pi).$$ 
By the functional equation \eqref{Intro_FE}, the real part of $\frac{\Lambda'}{\Lambda}(\cdot,\pi)$ vanishes on the line $\meio+iv$. Therefore
\begin{align*}
-\int_t^u\text{Re}\,\frac{L'}{L}\left(\hh+iv,\pi\right)\d v&=\int_t^u\left\{\frac{\log N}{2}+\sum_{j=1}^m\text{Re}\,\frac{\Gamma_\mathbb R'}{\Gamma_\mathbb R}\left(\mu_j+\hh+iv\right)\right\}\d v\\
&=\frac{1}{2}\int_t^u\left\{\log N+\sum_{j=1}^m\log\left|\tfrac{5}{2}+\mu_j+iv\right|\right\}\d v+O(m)\\
&=\frac{u-t}{2}\log C(t,\pi)+O(m).
\end{align*}
\end{proof}

To derive Theorem \ref{theorem2} from Theorem \ref{theorem1}, we recall the fact that $S_1(\cdot,\pi)$ is a primitive for $S(\cdot,\pi)$. Indeed, for almost every real $v$, $$S(v,\pi)=-\frac{1}{\pi}\int_{1/2}^\infty{\rm Im}\frac{L'}{L}\left(\sigma+iv,\pi\right)\dd\sigma.$$ The function $\frac{L'}{L}(\sigma+iv,\pi)$ is absolutely integrable in the region $\{s\in\mathbb C\,|\,{\rm Re}\,s\geq\meio\text{ and }t\le{\rm Im}\,s\le u\}$ since it has only simple poles and decays exponentially as $\sigma\to\infty$. So we can apply Fubini's theorem to get
\begin{align*}
\int_t^u S(v,\pi)\dd v&=-\frac{1}{\pi}\int_t^u\int_{1/2}^\infty{\rm Im}\frac{L'}{L}\left(\sigma+iv,\pi\right)\d\sigma\dd v\\
&=-\frac{1}{\pi}\int_{1/2}^\infty\int_t^u{\rm Im}\frac{L'}{L}\left(\sigma+iv,\pi\right)\d v\dd\sigma\\
&=\frac{1}{\pi}\int_{1/2}^\infty\big\{\log\left|L\left(\sigma+iu,\pi\right)\right|-\log\left|L\left(\sigma+it,\pi\right)\right|\big\}\dd\sigma\\
&=S_1(u,\pi)-S_1(t,\pi),
\end{align*}
as claimed.

\begin{proof}[Proof of Theorem \ref{theorem2}]
Let $\nu$ be a real number such that $0<\nu\le5$ to be chosen later. The inequality
$$|\log C(t+\nu,\pi)^{3/m}-\log C(t,\pi)^{3/m}|\leq 3\log3$$
implies that
$$\frac{\log C(t+\nu, \pi)^{3/m}}{(\log\log C(t+\nu,\pi)^{3/m})^2}=\frac{\log C(t,\pi)^{3/m}}{(\logfeio)^2}+O(1).$$
Therefore, by Theorem \ref{theorem1} at heights $t$ and $t+\nu$,
\begin{align*}
|S_1(t+\nu,\pi)-S_1(t,\pi)& |\le\frac{(1+2\vartheta)^2\pi}{16}\frac{\log C(t,\pi)}{(\logfeio)^2} +O\left(\frac{\log C(t,\pi)\log\logfeio}{(\logfeio)^3}\right).
\end{align*}
Applying Lemma \ref{lemma4} we see that
\begin{align*}
S_1(t+\nu,\pi)-S_1(t,\pi)& = \int_t^{t+\nu}S(u,\pi)\dd u\\
&\ge\int_t^{t+\nu}\left\{ S(t,\pi)-\frac{u-t}{2\pi}\,\log C(t,\pi)+O(m)\right\}\d u\\
& =  \nu S(t,\pi)-\frac{\nu^2}{4\pi}\,\log C(t,\pi)+O(m)
\end{align*}
and thus
\begin{align*}
S(t,\pi)\le&\frac{(1+2\vartheta)^2\pi}{16\nu}\frac{\log C(t,\pi)}{(\logfeio)^2}+\frac{\nu}{4\pi}\log C(t,\pi) + O\left(\frac{\log C(t,\pi)\log\logfeio}{\nu(\logfeio)^3}\right).
\end{align*}
The upper bound for $S(t,\pi)$ is obtained with the choice 
$$\nu=\frac{(1+2\vartheta)\pi}{2\logfeio}$$ 
(note that $0 < \nu \leq 5$) and the lower bound can be established by the same method, considering $S_1(t,\pi)-S_1(t - \nu,\pi)$.
\end{proof}

\begin{proof}[Proof of Corollary \ref{Cor3}] (i) If $\rho = \hh + i \gamma$ is a zero of $\Lambda(\cdot, \pi)$, part (i) follows directly from Theorem \ref{theorem2} and identity \eqref{Ntu} with $t = \gamma^-$ and $u = \gamma^+$.

\smallskip

\noindent (ii) By Lemma \ref{lemma4}, if $t$ and $u$ are real numbers such that $t<u\le t+5$ and $\Lambda(\cdot,\pi)$ has no zeros between $\meio+it$ and $\meio+iu$, $$\frac{u-t}{2\pi}{\log C(t,\pi)}=-S(u,\pi)+S(t,\pi)+O(m).$$ By Theorem \ref{theorem2}, 
$$u-t\le\frac{(1+2\vartheta)\pi}{\log\log C(t,\pi)^{3/m}}+a\,\frac{\log\log\log C(t,\pi)^{3/m}}{(\log\log C(t,\pi)^{3/m})^2}$$ 
for some universal constant $a$. If $\gamma'-\gamma\le5$, it is enough to let $t\to\gamma$ and $u\to\gamma'$. Otherwise, we let $u=t+5$ and $t\to\gamma$. The obtained inequality is possible only if $\log C(\gamma,\pi)^{3/m}\le e^{\max\{a,3\}}$. Taking $\Lambda(\cdot,\tilde\pi)$ in place of $\Lambda(\cdot,\pi)$, we get $\log C(\gamma',\pi)^{3/m} \le e^{\max\{a,3\}}$. Then 
$$\frac{1}{m}(\log C(\gamma,\pi)+\log C(\gamma',\pi))\le\frac{2}{3}e^{\max\{a,3\}},$$ 
and for some index $j$ we must have
$$\log(|i\gamma+\mu_j|+3)+\log(|i\gamma'+\mu_j|+3)\le\frac{2}{3}e^{\max\{a,3\}}.$$ This implies that $i\gamma+\mu_j$ and $i\gamma'+\mu_j$ are bounded by some universal constant.
\end{proof}

\section*{Acknowledgements}
\noindent \noindent E.C. acknowledges support from CNPq-Brazil grants $302809/2011-2$ and $477218/2013-0$, and FAPERJ grant $E-26/103.010/2012$. R.F. is indebted to CNPq-Brazil for its financial support.

\end{document}